\DeclareMathAlphabet{\mathpzc}{OT1}{pzc}{m}{it}
\DeclareMathOperator*{\supp}{supp}
\DeclareMathOperator*{\WF}{WF}
\DeclareMathOperator*{\graph}{graph}
\DeclareMathOperator*{\Id}{Id}
\newtheorem{theorem}{Theorem}
\numberwithin{prop}{section}
\numberwithin{corol}{section}
\newtheorem{lemma}{Lemma}
\numberwithin{lemma}{section}
\numberwithin{conjecture}{section}
\theoremstyle{definition}
\numberwithin{defin}{section}
\numberwithin{figure}{section}
\newcommand{\RR}{\mathbb R}
\newcommand{\bl}{\begin{flushleft}}
\newcommand{\el}{\end{flushleft}}
\newcommand{\br}{\begin{flushright}}
\newcommand{\ert}{\end{flushright}}
\newcommand{\bc}{\begin{center}}
\newcommand{\ec}{\end{center}}
\newcommand{\numList}{\begin{enumerate}}
\newcommand{\enumList}{\end{enumerate}}
\newcommand{\e}{\epsilon}
\newcommand{\mc}[1]{\mathcal{#1}}
\theoremstyle{remark}
\newtheorem{remark}{Remark}
\newtheorem{definition}{Definition}
\renewcommand{\S}{\mc{S}\ell}
\newcommand{\Lap}{\Delta}
\newcommand{\RP}{\mathbb{RP}}
\newcommand{\ZZ}{\mathbb{Z}}
\newcommand{\NN}{\mathbb{N}}
\newcommand{\SC}{\operatorname{SC}}
\newcommand{\minP}{\blue{P}}
\newcommand{\nonminP}{\blue{\widetilde{P}}}
\newcommand{\py}{\pi_{_{\!Y\!\times\! Y}}}
\newcommand{\ps}{\pi_{_{\!S^2\!\times \!S^2}}}
\newcommand{\DD}{\mathsf{D}}
\newcommand{\ep}{\epsilon}
\newcommand{\abs}[1]{{\left\lvert{#1}\right\rvert}}
\newcommand{\smallabs}[1]{{\lvert{#1}\rvert}}
\newcommand{\red}[1]{{#1}} 
\newcommand{\blue}[1]{{#1}}
\DeclareMathOperator{\singsupp}{sing supp}
\title{On non-diffractive cones}
\author{Jeffrey Galkowski}
\address{Department of Mathematics, University College London, London, UK}
\email{j.galkowski@ucl.ac.uk}
\author{Jared Wunsch}
\address{Department of Mathematics, Northwestern University, Evanston, IL, USA}
\email{jwunsch@math.northwestern.edu}
\begin{document}
\maketitle

\begin{abstract}
A subject of recent interest in inverse problems is whether a corner must diffract fixed frequency waves. 
\red{We study the related question of which} cones $[0,\infty)\times Y$ which do not diffract high frequency waves. We prove that if $Y$ is analytic and does not diffract waves at high frequency then every geodesic on $Y$ is closed with period $2\pi$. Moreover, we show that if $\dim Y=2$, then $Y$ is isometric to either the sphere of radius 1 or its $\mathbb{Z}^2$ quotient, $\RP^2$. 
\end{abstract}

\section{Introduction}

A subject of recent interest in the study of inverse problems has been
the question of whether corners must diffract fixed-frequency
solutions of the Helmholtz equation with potential in $\RR^2$; here a
corner is the location of a singularity of the potential, which is of
the form of a smooth function times the indicator function of a
sector.  Affirmative answers to this question have been obtained under
various conditions by Bl{\aa}sten--P\"aiv\"arinta--Sylvester
\cite{BlPaSy:14} and P\"aiv\"arinta--Salo--Vesalainen \cite{PaSaVe:17}
(who treat certain kinds of conic singularities in $\RR^3$ as well).
More recently, results on diffraction by partially transparent
polygons and polyhedra have been obtained by Elschner--Hu
\cite{ElHu:18}.

In this note, we introduce a related problem that seems fundamental to
the theory of diffraction.  On a cone, perhaps the simplest setting in
which diffraction is know to occur, must there be nontrivial
diffraction at high frequency?  In posing the problem as a
high-frequency one, we restate it as a question about singularities of
solutions to the wave equation.  If we study the half-wave propagator
$\red{e^{-it\sqrt{\Lap}}},$ \blue{we ask: must there be singularities to the
solution other than those along (the closure of) the geodesics missing
the cone tip, i.e., those predicted by geometric optics in its
na\"ivest form?}  \blue{Passing to \red{the} frequency-domain via Fourier
transform, the existence of these singularities implies nontrivial
asymptotics, as the frequency parameter tends to infinity, in regions
not predicted by geometric optics away from the cone tip.  Hence the 
question under consideration here is equivalent to one of high-frequency asymptotics in
stationary scattering.} 

Our main theorem, admittedly a very partial result in
the desired direction, is that if a real-analytic cone exhibits no
diffraction in this sense, then its link must have the property that
every geodesic is $2\pi$-periodic.  In the special case when the link
has dimension $2$ (and is still analytic)  we are further able to show
that the link must be $S^2$ equipped with its standard round
metric of circumference $2\pi,$ or else $\RP^2,$ its $\ZZ_2$-quotient.
Some remarks on conjectured stronger results may be found below.

We now state our results more precisely.

\begin{definition}
  A cone $C(Y)$ over a Riemannian manifold $(Y,h)$ of dimension $d-1$ is the $d$-manifold
  $$
\red{C(Y)}=[0,\infty)_x \times Y
$$
whose interior is equipped with the metric
$$
g = dx^2 +x^2 h.
$$
\end{definition}
\blue{\noindent Thus from the point of view of metric geometry, in the cone
  $C(Y),$ all points $(0,y),\ y \in Y$ are identified.}

\blue{\begin{remark}For brevity, the results below will all be stated for cones $C(Y),$
which are sometimes referred to as \emph{product cones}.  We remark,
however, that in view of \cite[Theorem~3.2]{FoWu:17}, our results apply
equally to diffraction by more general conic metrics.  These are
nondegenerate metrics on the interior of a manifold with boundary
which near the boundary take the form
$$
g=dx^2 +\red{x^2}h(x,y,dx,dy),
$$
where $x$ is a boundary defining function and $h$ is a smooth
symmetric $2$-cotensor that restricts to be a
metric on the boundary.  Boundary components thus become cone points,
and the results of \cite{FoWu:17} show that the leading order
contribution to the diffracted wave can be determined from the case of
a model product
cone obtained by freezing coefficients at the boundary after making an
appropriate choice of boundary defining function.\end{remark}}

\begin{definition}
We say that $C(Y)$ is \emph{non\red{-}diffractive} if
$$
\singsupp \kappa(e^{-it\sqrt{\Lap}}) =\overline{ \big\{ p,p'\colon p,p' \text{ are endpoints of a geodesic of length } \smallabs{t} \text{ in } \red{C(Y)}^\circ\big\}},
$$
where $\kappa(A)$ denotes the Schwartz kernel of the operator $A$ \red{and $B^\circ$ denotes the interior of $B$}.
Otherwise, we say $C(Y)$ is \emph{diffractive}.  (Here $\Lap$ denotes
the Friedrichs extension of the nonnegative Laplace-Beltrami operator from $\mathcal{C}_c^\infty(\red{C(Y)^\circ})$.)
\end{definition}
It is known that in general there are additional ``diffracted'' singularities of this Schwartz kernel, at $$\DD_t \equiv \{p,p'\colon x(p)+x(p')=\smallabs{t}\};$$ indeed there is a conormal singularity along this set, degenerating near its intersection with the set of endpoints of geodesics in $\red{C(Y)}^\circ,$ which always carries singularities.  We remark that this intersection occurs exactly at the set
$$
\big\{p,p' \colon x(p)+x(p')=\smallabs{t},\ y(p), y(p') \text{ endpoints of a geodesic of length } \pi \text{ in } Y\big\}.
$$

It follows from the work of Cheeger--Taylor \cite{ChTa:82}, \cite{ChTa:82a} (see \cite[Corollary 2.3]{FoWu:17}) that the \emph{principal symbol} of the diffracted wave on $\DD_t$ is a nonvanishing multiple of
the Schwartz kernel of the operator
$$
\exp\left(-i\pi \sqrt{\Lap_Y+\frac{(d-2)^2}{4}}\,\right),
$$
where $\Lap_Y$ is the (positive definite) Laplacian on the link $Y$ of the cone, with
respect to the metric $h.$
Setting $$\nu=\sqrt{\Lap_Y+\frac{(d-2)^2}{4}},$$ we thus find that
\emph{a sufficient condition for $C(Y)$ to be diffractive} is that for
some $y\in Y,$ $\kappa(e^{-i\pi \nu}\delta_y)$ should have support outside the distance sphere of radius $\pi$ centered at $y.$  It is this condition that we exploit in proving the following.

\begin{theorem}\label{theorem:zoll}
Let $C(Y)$ be non\red{-}diffractive, and $Y$ real analytic. Then every geodesic on $Y$ must be periodic with (not necessarily minimal) period $2 \pi.$
  \end{theorem}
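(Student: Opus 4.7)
Set $u_y := e^{-i\pi\nu}\delta_y$. The plan is to exploit the sufficient condition for diffractivity stated just before the theorem: non-diffractivity of $C(Y)$ gives
$$
\supp u_y \subseteq \overline{B(y,\pi)} \qquad \text{for every } y\in Y,
$$
where $B(y,\pi)\subseteq Y$ is the open geodesic ball of radius $\pi$. Write
$$
E_y := \bigl\{z\in Y : z \text{ is an endpoint of a geodesic of length } \pi \text{ starting at } y\bigr\},
$$
which is automatically contained in $\overline{B(y,\pi)}$. Since $Y$ is real analytic, $\nu$ is a classical analytic pseudodifferential operator of order $1$, so $e^{-i\pi\nu}$ is an analytic Fourier integral operator and, by the analytic propagation-of-singularities theorem, $u_y$ is real analytic on the open set $U_y := Y\setminus E_y$. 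The open set $V_y := Y\setminus \overline{B(y,\pi)}$ sits inside $U_y$, and $u_y$ vanishes on $V_y$. When $V_y$ is nonempty and meets every connected component of $U_y$, analytic continuation upgrades the support condition to
$$
\supp u_y \subseteq E_y.
$$

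Next I would interpret this sharpened support as a symplectic statement. Writing $\varphi_\pi:T^*Y\setminus 0\to T^*Y\setminus 0$ for the time-$\pi$ geodesic flow, $e^{-i\pi\nu}$ is a unitary, elliptic Fourier integral operator of order $0$ with canonical relation $\Lambda = \mathrm{graph}(\varphi_\pi)$. For its Schwartz kernel to be supported on $\Sigma := \{(z,y)\in Y\times Y : z\in E_y\}$, the wavefront set of that kernel — which equals $\Lambda$ on the elliptic set — must lie in the conormal bundle $N^*\Sigma$, and comparing dimensions of the two Lagrangians ($2\dim Y$ each) forces $\Lambda = N^*\Sigma$. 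But a graph-type Lagrangian like $\mathrm{graph}(\varphi_\pi)$ equals a conormal Lagrangian only when $\Sigma$ is itself the graph of a diffeomorphism $\sigma:Y\to Y$. Hence $E_y = \{\sigma(y)\}$ is a single point for every $y$: all geodesics of length $\pi$ from $y$ refocus at $\sigma(y)$, and $\varphi_\pi$ is the cotangent lift of $\sigma$; preservation of the energy $|\xi|_h$ forces $\sigma$ to be an isometry of $(Y,h)$. Applying the same analysis to $e^{+i\pi\nu}=(e^{-i\pi\nu})^{-1}$, whose kernel obeys the symmetric support bound by $d(y,z)=d(z,y)$, together with reversibility of geodesics yields $\sigma^{-1}=\sigma$. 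Therefore $\sigma^2=\mathrm{id}_Y$, and $\varphi_{2\pi}=\varphi_\pi\circ\varphi_\pi$ is the cotangent lift of the identity, i.e., $\varphi_{2\pi}=\mathrm{id}_{T^*Y}$; every unit-speed geodesic of $Y$ is therefore $2\pi$-periodic.

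The hard step is the analytic-continuation reduction $\supp u_y\subseteq E_y$. When $\dim Y\geq 3$, the set $E_y$ has codimension at least $2$ generically and $U_y$ is connected, so the argument is immediate. In dimension $\dim Y = 2$, however, $E_y$ may be a real-analytic curve disconnecting $Y$ into several components, only some of which meet $V_y$; and the degenerate possibility $\diam(Y)\leq\pi$ (which includes the round sphere and $\RP^2$) even makes $V_y$ empty, leaving the support hypothesis vacuous. Handling these cases — by varying $y$, exploiting joint real-analyticity of the kernel in both variables, or extracting additional focusing information from the microlocal symbol calculus when the bare support condition is weak — is where the bulk of the technical effort will lie.
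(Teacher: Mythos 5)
There are two genuine gaps here, and the second is fatal to the strategy.

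First, your starting point is weaker than what non-diffractivity actually provides. The condition the paper extracts (and states explicitly at the start of its proof) is that the kernel of $e^{-i\pi\nu}$ is supported on the set of pairs of endpoints of geodesics of length $\pi$, i.e.\ $\supp u_y\subseteq E_y$ directly; this comes from identifying where the conormal singularity along the diffracted front $\{x(p)+x(p')=|t|\}$ meets the closure of the geometric wavefront. Your attempt to recover $E_y$ from the ball $\overline{B(y,\pi)}$ by analytic continuation is therefore unnecessary, but more importantly it fails, as you concede, exactly in the cases that matter: whenever $\diam(Y)\le\pi$ (the round sphere, $\RP^2$, and any plausible non-diffractive link) the set $V_y$ is empty and the continuation argument says nothing. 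As written, the theorem would remain unproved for every example one actually wants to treat.

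Second, even granting $\supp u_y\subseteq E_y$, the inference ``kernel supported on $\Sigma$ and $\WF K=\Lambda'$, hence $\Lambda'=N^*\Sigma$ with $\Sigma$ the graph of a diffeomorphism, hence $E_y$ is a point'' does not hold. To begin with, $\Sigma=\pi_{Y\times Y}(\Lambda)$ is in general only a subanalytic stratified set, not a submanifold, so $N^*\Sigma$ is undefined; the paper must invoke Gabrielov--Hironaka--Hardt stratification to extract a maximal open stratum $F$. More seriously, the containment $\Lambda'\subset SN^*F$ over the smooth stratum holds \emph{automatically}, whatever the dimension of $F$ is --- this is the paper's Lemma~\ref{lemma:conormal}, a Gauss-lemma-type computation with Jacobi fields --- so it imposes no constraint; and the dimension count is vacuous because a conormal Lagrangian $N^*\Sigma$ has dimension $2\dim Y$ for a base $\Sigma$ of \emph{any} dimension. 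Concluding that $E_y$ is a single point is far stronger than Theorem~\ref{theorem:zoll} claims and is only established (in dimension $2$, with substantial additional work) in the proof of Theorem~\ref{theorem:2d}. The information the support condition actually yields is of a different nature: a distribution supported on a smooth submanifold $F$ with wavefront set in $SN^*F$ is locally a finite sum $\sum\delta^{(\alpha)}(u)\phi_\alpha$ with smooth coefficients, and the wavefront set of such a distribution is invariant under the fiberwise negation $\eta\mapsto-\eta$. Combined with $\WF K=\Lambda'$ this gives $\Phi_\pi(y,\eta)=\Phi_{-\pi}(y,\eta)$ on a nonempty open subset of $S^*Y$, hence $\Phi_{2\pi}=\Id$ there, and analyticity of the metric propagates this identity to all of $S^*Y$. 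That parity argument is the key idea your proposal is missing.
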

  \begin{remark}
    \blue{Many manifolds exist on which all geodesics are periodic with the
    same period:} in addition to the compact \blue{rank one} symmetric spaces and their quotients, there is a
    menagerie of so-called Zoll manifolds which enjoy this
    property---see \cite{Be:78} for detailed discussion. 
\end{remark}

    Conversely, we remark that if $Y$ is a spherical space form, i.e.,
    the quotient of $S^{d-1}$ with the standard metric on the \emph{unit} sphere by the fixed-point-free action of a \red{finite}
    subgroup of $G\subset O(d),$ then $C(Y)$ is the quotient of
    $\RR^{d}$ by the action of $G,$ blown-up at the origin (i.e.,
    viewed in polar coordinates).  The method of images then shows
    that $C(Y)$ is non\red{-}diffractive, since the Schwartz kernel of
    $e^{-it\nu}$ on $C(Y)$ may be obtained by averaging over
    the action of $G$ the corresponding Schwartz kernel on $\RR^d,$
    where ordinary propagation of singularities along geodesics holds
    true. In the case of $d=2,$ i.e., $\dim Y=1$,  $e^{-i\pi\nu}$ can be calculated explicitly as in the work of Hillairet~\cite{Hil02} and it is easy to verify that these are the only non-diffractive links. In
    fact \emph{we conjecture that these are the only examples,} even
    in the smooth category: if $C(Y)$ is non\red{-}diffractive and $Y$ merely
    $\mathcal{C}^\infty,$ then we conjecture that $Y$ must be a
    spherical space form.  This conjecture seems out of reach for the
    moment.

  Returning to the analytic case, we have been able to verify our conjecture in the
  case of dimension $2,$ ruling out Zoll manifolds \red{that are not spherical space forms}.
  \begin{theorem}\label{theorem:2d}
    Let $C(Y)$ be non\red{-}diffractive with $Y$ analytic and $\dim Y=2.$
    Then $Y$ is either $S^2$ or $\RP^2$ equipped with its standard metric.
  \end{theorem}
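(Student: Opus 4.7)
My plan is to use the sufficient condition for diffractiveness recorded just before Theorem~\ref{theorem:zoll}, which says that non-diffractiveness forces, for every $y\in Y$,
\[
\supp\bigl(e^{-i\pi\nu}\delta_y\bigr)\subseteq A_y:=\{\gamma_\xi(\pi):\xi\in S_yY\},
\]
the time-$\pi$ image under the geodesic flow of the unit sphere at $y$. By Theorem~\ref{theorem:zoll}, $Y$ is analytic Zoll of period $2\pi$, hence topologically $S^2$ or $\RP^2$; the task is to extract isometric rigidity from this support restriction. Since $A_y$ is the continuous image of the circle $S_yY$, it is compact and connected, hence either a point or a $1$-dimensional curve.

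The central step is to eliminate the curve alternative via a Sobolev regularity argument. Because $e^{-i\pi\nu}$ commutes with $\Lap_Y$ it acts unitarily on every $H^s(Y)$, so $u:=e^{-i\pi\nu}\delta_y$ has exactly the same Sobolev profile as $\delta_y$: $u\in H^{-s}(Y)$ if and only if $s>1$. A distribution on $Y$ supported on a smooth codimension-$1$ submanifold $\Sigma$ has the form $\sum_{k=0}^{K}a_k\partial_n^k\delta_\Sigma$ for some $K\in\NN$, and such a sum lies in $H^{-s}(Y)$ iff $s>K+\tfrac12$. Since the half-integer $K+\tfrac12$ can never equal the integer $1$, $A_y$ cannot be $1$-dimensional. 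Hence $A_y=\{y^*\}$ is a single point, and the same Sobolev bookkeeping forces $u=c(y)\delta_{y^*}$ for a scalar $c(y)$.

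It remains to analyze the analytic map $y\mapsto y^*$. If $y^*\neq y$ (which by analyticity must hold identically or not at all), then the time-$\pi$ exponential map at every point is a collapse to a single point, so the cut locus of every $y$ is a single point. Green's theorem on $2$-dimensional Blaschke manifolds then identifies $Y$ with the round $S^2$ (the round $\RP^2$ option, whose cut locus is a copy of $\RP^1$, is excluded here). If instead $y^*=y$ everywhere, then a short orientation argument on $S^*Y$---the only continuous orientation-preserving involution of $S^1$ other than the identity is the half-rotation $\xi\mapsto-\xi$, which in turn implies a symmetry $\gamma(\tfrac\pi2+t)=\gamma(\tfrac\pi2-t)$ contradicting $|\gamma'|=1$---shows that the time-$\pi$ flow is actually the identity on all of $S^*Y$, so the minimal geodesic period is $\pi$. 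Then $e^{-i\pi\nu}$ is multiplication by $c(y)$; commutation with $\Lap_Y$ forces $c$ constant, so $e^{-i\pi\nu}=c\cdot\Id$ and the spectrum of $\nu$ lies \emph{exactly} in an arithmetic progression of spacing $2$. Normalizing so that $\nu_0=1/2$, this yields that the spectrum of $\Lap_Y$ is contained in $\{2n(2n+1):n\in\NN\}$, matching round $\RP^2$. The Colin de Verdi\`ere--Weinstein cluster formula for Zoll $2$-manifolds of period $\pi$ places clusters at $\nu=2n+\tfrac12$ for topological $\RP^2$ but at $\nu=2n+1$ for topological $S^2$, so only the $\RP^2$ topology is consistent with our exact spectrum; combining this with the Weyl law and the Berger--Kazdan inequality $\operatorname{Vol}(Y)\geq 2\pi$ (with equality iff round) yields $\operatorname{Vol}(Y)=2\pi$ and forces $Y$ to be round $\RP^2$.

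The main obstacle I anticipate is the second case: promoting the scalar identity $e^{-i\pi\nu}=c\cdot\Id$ to isometric rigidity requires combining Weinstein's cluster asymptotics, the Weyl law, and the Berger--Kazdan inequality with care. The Sobolev step, though the crucial new input, reduces the problem quickly to the classical Blaschke realm once one notices the integer/half-integer mismatch in the regularity thresholds.
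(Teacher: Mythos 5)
Your reduction to the statement that $\Phi_\pi(S^*_yY)$ projects to a single point is essentially the paper's own argument, transposed from the Schwartz kernel $K$ on $Y\times Y$ to its columns $e^{-i\pi\nu}\delta_y$: the paper plays ${\WF}^{-1}(K)'=\Lambda$, ${\WF}^{-1-\ep}(K)=\emptyset$ against the conormal threshold $-1/2-j$, while you play unitarity on $H^s$ against $K+\tfrac12$ --- the same integer/half-integer mismatch. Be aware, though, that the threshold $K+\tfrac12$ is only valid once you know the support is (generically) a smooth curve and the distribution is conormal to it with \emph{smooth} coefficients; with distributional coefficients $a_k$ the regularity can be anything (e.g.\ $a_0=\delta_p$ gives back a point mass). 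The paper spends Lemma~\ref{l:stratified} (subanalytic stratification of $\py\Lambda$) and Lemma~\ref{lemma:conormal} on exactly this point, and your proposal does not address it.

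The genuine gap is in your first case. Green's theorem identifies a two-dimensional Blaschke manifold with a round sphere of \emph{some} circumference, not with $S^2_{2\pi}$. Every $S^2_{2\pi/k}$ with $k$ odd and $k\geq 3$ satisfies all the conditions you have used up to that point: it is $\nonminP_{2\pi}$, $\Phi_\pi$ collapses each cosphere to the antipode, and the cut locus of each point is a single point. Your argument therefore terminates without excluding these, and they \emph{are} diffractive, so something further is required. The paper devotes a separate step to this: writing $e^{-i\pi\nu}\delta_x=c\,\delta_{-x}$ in spherical harmonics on $S^2_{2\pi/k}$ and showing via Lemma~\ref{l:noInteger} that $\sqrt{k^2\ell(\ell-1)+\tfrac14}-\ell+\tfrac12$ cannot have constant fractional part unless $k=1$. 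Smaller but real issues: the dichotomy ``$y^*=y$ identically or never'' does not follow from analyticity (an analytic map may have a nonempty proper fixed-point set); the paper instead splits on the parity of $k$ in $\minP_{2\pi/k}$ and uses Gromoll--Grove's simplicity theorem both for this and to establish the Blaschke property (your assertion ``so the cut locus of every $y$ is a single point'' also needs that input --- see Lemma~\ref{l:blaschke}). Finally, your endgame in the $y^*=y$ case (Maslov indices plus Weyl law plus Berger--Kazdan) is only sketched and, as stated, does not obviously pin down the multiplicities needed to extract the volume; the paper instead feeds Colin de Verdi\`ere's clustering into Zelditch's maximally degenerate Laplacian theorem to obtain the $-3/(16\ell)$ correction and again invokes Lemma~\ref{l:noInteger}.
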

  We emphasize that by ``standard metric'' on $S^2$ or $\RP^2$ we do \emph{not} mean ``standard
  metric up to scale,'' but rather the metric on the
  \emph{unit} sphere in $\RR^3$ and its $\ZZ_2$-quotient respectively;
  spheres and projective spaces of other sizes \emph{do} diffract (as
  our proof shows). \red{Unlike in many other geometric situations, the scaling of the metric plays a role since it corresponds to the size of the ``opening'' of the cone.}

  In order to clarify these distinctions, we will use the notation
  $S^2_a$ and $\RP^2_a$ for the sphere equipped with the round metric
  of \emph{circumference} $a$ and its $\ZZ_2$-quotient, respectively.
  Hence $S^2_{2\pi}$ and $\RP^2_{2\pi}$ are the standard sphere and
  projective space.
  \blue{We introduce the non-standard terminology that a $\nonminP_a$
  manifold is one on which all geodesics are periodic with common
  period $a,$ while we follow \cite{Be:78} in letting $\minP_a$
  manifolds denote those $\nonminP_a$ manifolds on which $a$ is the
  \emph{minimal} common period.}
Thus, $S^2_a$ is a $\minP_a$ surface while
  $\RP^2_a$ is a $\minP_{a/2}$ surface.

\section*{Acknowledgments}
The authors are grateful to Steve Zelditch for helpful
discussions. Thanks also to the anonymous referees for many helpful
comments. J.G.\ is grateful to the National Science Foundation for
support under the Mathematical Sciences Postdoctoral Research
Fellowship DMS-1502661 and DMS-1900434.  J.W.\ was partially supported by NSF grant
DMS--1600023.  J.W.\ thanks the MBLWHOI Library for the use of its
reading room.

\section{Proof of Theorem~\ref{theorem:zoll}}
Let $\Phi_t$ denote geodesic flow for time $t$ on $S^*Y,$ i.e., the time-$t$ flow generated by the Hamilton vector field of $(1/2)\smallabs{\xi}^2_g,$ restricted to the unit cotangent bundle.  Let $\pi_{_{\!Y}}$ denote the projection $S^*Y \to Y.$

Let $$K\equiv \kappa(e^{-i \pi\nu}).$$  
Recall that a necessary condition for $Y$ to be non-diffractive is
$$
\supp K \subset \big \{ y,y'\colon y,y' \text{ are endpoints of a geodesic of length } \pi \big\}.
$$
(Standard propagation of singularities results \cite{DuHo:72} show that
the \emph{singular support} of $K$ lies in the latter set.)

Since $Y$ is analytic, we note that in order to show that all
geodesics are periodic with period $2\pi$, it suffices to show that on
a nonempty open set \blue{in $S^*Y,$} 
$$
\Phi_{2\pi}=\Id.
$$
Hence our strategy is to show that the support condition for $e^{-i\pi\nu}$ implies the existence of these closed geodesics.

Consider first the manifold
$$
\Lambda \equiv \graph(\Phi_\pi) \subset S^*Y \times S^*Y
$$
A key observation is now that $\WF K =\Lambda'$ (see, e.g., \cite[Theorem 1]{DuGu:75}) where 
$$
\Lambda':=\big\{(x,\xi,y,\eta)\mid (x,-\xi,y,\eta)\in \Lambda\big\}.
$$
\blue{Setting
$$
\begin{aligned}
  \Psi,\Psi': S^*Y &\to \Lambda\\
  \Psi(y,\eta) &= (y,\eta,\Phi_\pi(y,\eta))\\
    \Psi'(y,\eta) &= (y,-\eta,\Phi_\pi(y,\eta))\\
  \end{aligned}
  $$
  we thus express $\Lambda'$ as the range of $\Psi'.$

\blue{We now consider the projection,
$$
\py \Lambda \subset Y\times Y,
$$
which is where $\supp K$ lives, by hypothesis.  We remark that
$\py \Lambda$ is certainly not guaranteed to be a smooth manifold.
However, since our hypotheses imply that $\Lambda$ is analytic,
certainly $\py\Lambda$ is subanalytic, by definition.  A theorem of
Gabrielov \cite{Ga:68}, later rediscovered by Hironaka \cite{Hi:73}
and Hardt \cite{Ha:75} then implies that $\py \Lambda$ is a
\emph{stratified} space, and in particular, contains as an open
subset, $F$, a maximal-dimensional embedded submanifold (note that the
``semianalytic shadows'' in \cite{Ha:75} are synonymous with
subanalytic sets). We will
employ a slight strengthening of this statement, also following from the results
of \cite{Ha:75}.}
 

\red{ \begin{lemma}
 \label{l:stratified}
 There is an open subset $F\subset \py \Lambda$ such that 
 \begin{enumerate}
 \item $F$ is a maximal-dimensional embedded submanifold.
 \item The set $\tilde{F}= \Psi^{-1}\py^{-1}(F)\subset S^*Y$ is open
 \item  For $\rho \in\tilde{F}$, $\operatorname{rank}d\py d\Psi(\rho)=\dim F.$
 \end{enumerate}
 \end{lemma}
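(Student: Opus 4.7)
The plan is to start with a top-dimensional stratum of $\py\Lambda$ and then shrink it to avoid the images of points where $\py\circ\Psi$ has less than maximal rank. Since $Y$ is real analytic and the geodesic flow of an analytic metric is analytic, $\Lambda$ is an analytic submanifold of $S^*Y\times S^*Y$, hence $\py\Lambda$ is subanalytic. The Gabrielov--Hironaka--Hardt stratification theorem then provides an open subset $F_0\subset \py\Lambda$ which is an embedded analytic submanifold of $Y\times Y$ of maximal dimension $r:=\dim \py\Lambda$. This $F_0$ already answers (1), but additional work is required to arrange the compatibility conditions (2) and (3).

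The refinement uses the rank stratification of the analytic map $\py\circ\Psi\colon S^*Y \to Y\times Y$. Since its image is exactly $\py\Lambda$, the maximal rank of $d(\py\circ\Psi)$ equals $r$. Let $U\subset S^*Y$ be the open locus where this maximal rank is attained, and set
$$
B := (\py\circ\Psi)(S^*Y\setminus U).
$$
Because $S^*Y\setminus U$ is a compact subanalytic set and $\py\circ\Psi$ is proper, $B$ is a closed subanalytic subset of $Y\times Y$; the constant-rank theorem applied at each point of $S^*Y\setminus U$ shows $\dim B < r$. Now define
$$
F := F_0\setminus B,\qquad \tilde{F} := \Psi^{-1}\py^{-1}(F).
$$
Since $B$ is closed in $Y\times Y$ and of dimension strictly less than $r$, the set $F$ is a nonempty open subset of $F_0$ and hence an embedded analytic submanifold of $Y\times Y$ of dimension $r$, confirming (1).

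Property (3) is now immediate: by construction $F\cap B=\emptyset$, so $\tilde{F}\subset U$, and thus $\operatorname{rank}d(\py\circ\Psi)=r=\dim F$ on $\tilde{F}$. For (2), take $\rho\in\tilde{F}$ with image $p=(\py\circ\Psi)(\rho)\in F$. The constant-rank theorem, applied inside $U$, represents $\py\circ\Psi$ near $\rho$ as a submersion onto a local $r$-dimensional analytic submanifold of $Y\times Y$ containing $p$. This submanifold lies inside $\py\Lambda$ near $p$; since $F_0$ is open in $\py\Lambda$, $\py\Lambda$ coincides with $F_0$ in a neighborhood of $p$, and so the local image lies in $F_0$. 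Shrinking further to remain in the complement of the closed set $B$, one obtains a neighborhood of $\rho$ in $S^*Y$ whose image lies in $F$, exhibiting $\tilde{F}$ as open. The main obstacle is precisely this last openness step: $F$ is only open relative to the subanalytic set $\py\Lambda$, not in the ambient manifold $Y\times Y$, so openness of $\tilde{F}$ in $S^*Y$ must be obtained by combining the constant-rank normal form with the local identification of $\py\Lambda$ and its top stratum.
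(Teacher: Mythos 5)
Your argument is correct in outline and takes a genuinely different route from the paper. The paper invokes the stronger form of Hardt's theorem (\cite[Corollary 4.4]{Ha:75}), which produces \emph{compatible} stratifications of the source $S^*Y$ and the target $Y\times Y$ such that each source stratum $S$ maps onto a target stratum with $\operatorname{rank}d\Theta|_{S}=\dim\Theta(S)$, where $\Theta=\py\circ\Psi$; one then takes $\tilde F$ to be an open (full-dimensional) source stratum realizing the maximal rank and sets $F=\Theta(\tilde F)$, so that openness of $\tilde F$ and the disjointness of the target strata deliver all three properties simultaneously. You instead use only the existence of a top-dimensional stratum $F_0$ of the subanalytic set $\py\Lambda$ and do the rank bookkeeping by hand: excise the image $B$ of the non-maximal-rank locus, and recover openness of $\tilde F$ from the constant-rank normal form on the maximal-rank locus $U$ together with the local identification of $\py\Lambda$ with $F_0$ near points of $F_0$. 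This yields a more self-contained argument resting on the weaker stratification statement plus classical differential topology, at the price of having to establish the dimension bound on $B$ yourself.

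That bound is the one place where your justification fails as written: the constant-rank theorem cannot be applied ``at each point of $S^*Y\setminus U$,'' since on that set $\operatorname{rank}d\Theta\le r-1$ but need not be locally constant, which is precisely the hypothesis the theorem requires. The conclusion $\dim B\le r-1$ is nonetheless true and standard: either invoke the Morse--Sard--Federer theorem, which bounds the Hausdorff dimension of the image of the locus $\{\operatorname{rank}d\Theta\le r-1\}$ by $r-1$ (and Hausdorff dimension agrees with subanalytic dimension for $B$), or apply Hardt's stratification to $\Theta|_{S^*Y\setminus U}$ and note that each stratum maps onto a set of dimension equal to the rank of the restricted differential, hence $\le r-1$. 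The same Sard-type input is needed for your unproved assertion that the maximal rank of $d\Theta$ equals $r=\dim\py\Lambda$ (the inequality $\max\operatorname{rank}\ge r$ requires it, while $\le$ follows from the rank theorem at a point of maximal rank). With those two citations supplied, the remaining steps---$F$ open and nonempty in $F_0$ because an $r$-manifold cannot be covered by a closed set of dimension $r-1$, property (3) from $\tilde F\subset U$, and the openness of $\tilde F$ via the rank theorem---are sound.
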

 \begin{proof}
 Recall that a \emph{stratification} of a manifold $M$ is a locally finite collection $\mc{S}$ of connected, embedded open submanifolds such that $\sqcup_{S\in\mc{S}}S=M$ and
 \begin{gather*}
 \text{ if }S,T\in \mc{S}\text{ and }T\cap \partial S\neq \emptyset,\text{ then }\dim T<\dim S,\, T\subset \partial S.
 \end{gather*}
 Let $\Theta:=\py\circ \Psi$. By~\cite[Corollary 4.4]{Ha:75}, since
 $\Theta:S^*Y\to Y\times Y$ is an analytic mapping of real analytic
 manifolds, there is a stratification, $\mc{S}$, of $S^*Y$ and
 $\mc{T}$ of $Y\times Y$ such that for $S\in \mc{S}$, \blue{$\Theta(S) \in
 \mc{T},$ with
 $$
 \operatorname{rank}d\Theta|_{S}= \dim\Theta(S).
 $$}
 Define  $k:=\sup_{S^*Y} \operatorname{rank}d\Theta$ and let $\tilde{F}\in \mc{S}$ such that   
 $$
\operatorname{rank}d\Theta|_{\tilde{F}}=k,\qquad \dim \tilde{F} =2n-1
 $$
 Then $F:=\Theta(\tilde{F})\in \mc{T}$ is an embedded submanifold of dimension $k$. Moreover, $\py\Lambda$ is contained in a finite union of submanifolds of dimension $\leq k$ and hence $F$ has maximal dimension. 

Now, since $\mc{T}$ is a stratification, and $\Theta(S)\in\mc{T}$ for $S\in \mc{S}$,
$$
\bigcup_{\substack{S\in \mc{S}\\S\neq S_k}}\Theta(S)\cap F=\emptyset.
$$
In particular, $\Theta^{-1}(F)=\tilde{F}$ and the proof is complete.
 \end{proof}
}

\red{Let $F$, $\tilde{F}$ as in Lemma~\ref{l:stratified}. Then, by construction, for $\rho\in\tilde{F}$, $d\py d\Psi:T_{\rho}S^*Y \to T_{\py \circ\Psi(\rho)} F$ is surjective. }
\begin{lemma}\label{lemma:conormal}
Suppose that $d\py d\Psi:T_{\rho}S^*Y \to T_{\py \circ\Psi(\rho)} F$ is surjective. Then $\Psi'(\rho)\in SN^*\!F$. 
\end{lemma}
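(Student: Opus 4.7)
The plan is to derive the conormality of $\Psi'(\rho)$ from the microlocal identity $\WF K = \Lambda'$ together with the support constraint $\supp K \subset \py \Lambda$ coming from the non-diffractive hypothesis. The surjectivity hypothesis is, by construction, equivalent to $\rho \in \tilde{F}$, and in particular implies $\py \circ \Psi(\rho) \in F$. The strategy is to show that $\supp K$ coincides with the smooth submanifold $F$ in some open neighborhood of $\py \circ \Psi(\rho)$ in $Y \times Y$, then apply the standard principle that a distribution supported in a smooth embedded submanifold has its wavefront set contained in the conormal bundle of that submanifold.

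The first substantive step is to verify that $F$ is topologically open in $\py \Lambda$. Since $S^*Y$ is compact and $\Phi_\pi$ is a diffeomorphism, $\Lambda$ is closed, so $\py \Lambda$ is compact and closed in $Y \times Y$. By Lemma~\ref{l:stratified}, $F$ is a stratum of maximal dimension in a stratification of $\py \Lambda$. The stratification axiom that $T \cap \partial S \neq \emptyset$ forces $\dim T < \dim S$; applied with $T = F$, this would contradict the maximality of $\dim F$, so no other stratum has $F$ in its closure. Consequently there is an open neighborhood $U \subset Y \times Y$ of $\py \circ \Psi(\rho)$ with $U \cap \py \Lambda = U \cap F$, and $K|_U$ is supported in the smooth embedded submanifold $U \cap F$.

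Next I would invoke the standard fact that a distribution $u$ with support in a smooth embedded submanifold $N \subset M$ has $\WF u \subset N^* N$. Locally one chooses coordinates in which $N$ is a coordinate subspace and writes $u$ as a locally finite sum of transverse derivatives of delta-distributions along $N$ with smooth coefficients; each such term has wavefront set contained in $N^* N$. Applied to $K|_U$ and combined with $\WF K = \Lambda'$, this yields that any point of $\Lambda'$ lying above $F$ must lie in $N^* F$. Since $\Psi'(\rho) \in \Lambda'$ has base point $\py \Psi'(\rho) = \py \circ \Psi(\rho) \in F$, and since $\Psi'(\rho)$ is manifestly nonzero (its $S^*Y$-components are unit covectors), we conclude $\Psi'(\rho) \in SN^* F$.

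The main step requiring nontrivial input is the topological openness of $F$ in $\py \Lambda$: the subtlety is that $F$ is only \emph{a priori} an embedded submanifold of $Y \times Y$, not of $\py \Lambda$, so lower-dimensional pieces of $\py \Lambda$ could accumulate at $\py \circ \Psi(\rho)$ and contribute wavefront covectors not conormal to $F$. Ruling this out uses the dimensional maximality of $F$ within the stratification of Lemma~\ref{l:stratified} in an essential way; once that is in hand, everything reduces to the standard support-implies-conormality principle.
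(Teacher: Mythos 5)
Your proposal has a genuine gap at its central step. The claim that ``a distribution supported in a smooth embedded submanifold $N$ has wavefront set contained in $N^*N$'' is false. The structure theorem for distributions supported on a codimension-$k$ submanifold gives a local representation $\sum_{|\alpha|\le M}\delta^{(\alpha)}(u)\,\phi_\alpha$ with coefficients $\phi_\alpha$ that are a priori only \emph{distributions} on $N$, not smooth functions, and when the $\phi_\alpha$ are singular the wavefront set sticks out of the conormal bundle: for example $\delta_{(0,0)}$ in $\RR^2$ is supported in the line $\{x_1=0\}$ but its wavefront set is all of $T^*_{(0,0)}\RR^2\setminus 0$. Your parenthetical ``with smooth coefficients'' is precisely the point at issue. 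Indeed, in the paper the smoothness of the $\phi_\alpha$ is \emph{deduced from} Lemma~\ref{lemma:conormal} (via $\Lambda'\cap\py^{-1}(F)\subset SN^*F$), so deriving the lemma from that smoothness is circular. Note also that the lemma as stated is a purely geometric assertion whose hypothesis is only the surjectivity of $d\py\, d\Psi$; it makes no reference to $K$, and the intended proof should not need the non-diffractive hypothesis at all.

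The paper's actual proof is a direct computation: using the surjectivity hypothesis to write every tangent vector of $F$ at $\py\circ\Psi(\rho)$ as $(d\pi(V),\,d\pi\circ d\Phi_\pi(V))$, it shows the pairing of $\Psi'(\rho)$ with such a vector vanishes by proving that $t\mapsto \Phi_t(\rho)[d\pi\circ d\Phi_t V]$ is constant. This is a Jacobi-field/first-variation argument (essentially that $\langle\dot\gamma,J\rangle$ is affine in $t$ with vanishing derivative, by the Gauss lemma on $S^*Y$), i.e., the familiar fact that the graph of geodesic flow, twisted by fiber negation, is Lagrangian for the difference symplectic form. Your topological observation that $F$ is open in $\py\Lambda$ is fine (and is already part of Lemma~\ref{l:stratified}), but it cannot substitute for this geometric computation.
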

\begin{proof}
Fix any $\rho_0 \equiv (y,\eta) \in S^*Y$ satisfying the hypotheses.  We need to show that all vectors in
$T_{\red{\py\circ}\Psi(\rho_0)} F$ are annihilated by pairing with $\Psi'(\rho_0).$

By hypothesis,
$$
\begin{aligned}
T_{\red{\py\circ}\Psi(\rho_0)} F &= \big\{ d\py \circ d\Psi( V)\colon V \in
               T_{\rho_0}S^*Y\big\}\\
  &= \big\{(d\pi(V), d\pi\circ  d\Phi_\pi (V))\colon V \in
               T_{\rho_0}S^*Y\big\},
\end{aligned}
    $$
    hence we need to show that the pairing of a vector of this form with
    $\Psi'(\rho_0)$ vanishes, i.e., (letting square bracket denote the
    pairing of a covector with a vector) that
\begin{equation}\label{pairing1}
-\rho_0[d\pi (V)]+\Phi_\pi(\rho_0)[d\pi\circ  d\Phi_\pi( V)]=0,\quad \text{ for all } V
\in T_{\rho_0} S^*Y.
\end{equation}

    Now we investigate the quantity $\Phi_\pi(\rho_0) [d\pi\circ d\Phi_\pi (V)]$.
    For any $V\in T_{\rho_0}S^*Y,$ choose $\rho:(-\e,\e)\to S^*Y$ with
    $\rho(0)=\rho_0$ and $\partial_s\rho|_{s=0}=V$. Next, define
    $\Gamma(s,t)=\pi(\Phi_t(\rho(s)))$. Then
    $J(t):=\partial_s\Gamma(s,t)|_{s=0}=d\pi \circ d\Phi_t V$ is a
    Jacobi field along $\gamma(t):=\Gamma(0,t)$ and
$$
\Phi_t(\rho(0))[J(t)]=\langle \dot{\gamma}(t) ,J(t)\rangle_g.
$$

Since $J$ is a Jacobi field, $\partial_t^2\langle \dot{\gamma}(t)
,J(t)\rangle_g=0$ (see \cite[p.288]{Le:18}).
Moreover, we compute (using symmetry of the connection---\cite[Lemma 6.2]{Le:18})
\begin{align*}
\partial_t\langle \dot{\gamma}(t),J(t)\rangle_g|_{t=0}&= \langle \dot{\gamma}(0),D_t J(0)\rangle_g\\
&=\langle \dot{\gamma}(0),D_s d\pi H_p(\rho(s))|_{s=0}\rangle_g\\
&=\langle d\pi H_p(\rho(0)),D_s d\pi H_p(\rho(s))|_{s=0}\rangle_g\\
&=\frac{1}{2}\partial_s \langle d\pi H_p(\rho(s)),d\pi H_p(\rho(s))\rangle_g|_{s=0}
\end{align*}
Now, in coordinates, we have $d\pi H_p= g^{ij}\xi_i\partial_{x_j}$ and therefore, since $\rho(s)\in S^*Y$,
$$
\langle d\pi H_p(\rho(s)),d\pi H_p(\rho(s))\rangle_g=g^{ij}\xi_i(s)\xi_j(s)\equiv 1.
$$
Therefore, $\partial_t\langle \dot{\gamma}(t) ,J(t)\rangle_g|_{t=0}=0$. We have now shown that for any $V\in T_{\rho_0}S^*Y$, $\Phi_t(\rho_0)[d\pi \circ d\Phi_t V]$ is constant and in particular, 
$$
\rho_0[d\pi \circ d\Phi_t V]-\Phi_\pi(\rho_0)[d\pi \circ d\Phi_\pi V]=0,
$$
thereby establishing \eqref{pairing1}.
\end{proof}   }

Our hypotheses are that $\supp K \subset \py \Lambda,$ hence in a neighborhood $V$ of any point in $F$, $\supp K \subset F.$  Since $F$ is a smooth \red{embedded sub}manifold, we thus know that on $V$, we may express
$$
K=\sum \delta^\alpha(u)\phi_\alpha(y)
$$
where $u=(u_1,\dots,u_k)$ are defining functions for $F$, $y$ complete $u$ to a local coordinate system, \red{and $\phi_\alpha\in \mc{D}'(F)$}. \red{Moreover, by Lemma~\ref{lemma:conormal}, for $\rho\in \tilde{F}=\Psi^{-1}\py^{-1}(F)$, $\Psi'(\rho)\in SN^*F$. In particular, 
$$
\WF(K)\cap\py^{-1}(F)=\Lambda'\cap \py^{-1}(F)\subset SN^*F
$$ 
which implies $\phi_\alpha\in C^\infty(F)$.}

 Such a distribution has the property that {its} wavefront set is invariant under the negation map on fibers:
$$
(y,\eta,y',\eta') \in \WF K\cap \py^{-1} (F\cap V)\Longrightarrow (y,-\eta,y',-\eta') \in \WF K\cap \py^{-1} (F\cap V).
$$
Thus, since $\WF K=\Lambda',$
$$
(y,\eta,y',\eta') \in \Lambda \cap \py^{-1} (F\cap V) \Longrightarrow (y,-\eta,y',-\eta') \in \Lambda \cap \py^{-1} (F\cap V).
$$
This precisely means that for $(y,\eta) \in \pi_L(\Lambda \cap\py^{-1}(F\cap V)),$
$$
\Phi_{\pi}(y,\eta) = -\Phi_{\pi}(y,-\eta)=\Phi_{-\pi}(y,\eta)
$$
(with negation interpreted as acting on the fibers).  Hence
$$
\Phi_{2\pi}(y,\eta)=(y,\eta).
$$
\red{Now set
  $U=\pi_L(\Lambda \cap\py^{-1}(F\cap V)).$
  Since $\pi_L:\Lambda \to S^*Y$ is
  bijective, and $\Lambda\cap \py^{-1}(F\cap V)$ is open} we have proved the desired periodicity of geodesics on a
  nonempty open set in $S^*Y.$ \qed

\section{Proof of Theorem~\ref{theorem:2d}}

By Theorem~\ref{theorem:zoll}, $Y$ is a $\nonminP_{2\pi}$ surface.  Thus, it
is diffeomorphic to either $S^2$ or $\RP^2$---see \cite[Section
4.3]{Be:78}.

We begin with the case where $Y$ is diffeomorphic to
$S^2.$  As in the proof of Theorem~\ref{theorem:zoll}, we consider
$\ps\Lambda\subset S^2\times S^2,$ the projection of the graph of
time-$\pi$ geodesic flowout in $S^*( S^2);$ we again use crucially that
this is a stratified space.  Since the dimension of $\Lambda$ itself
is $3$ and since \red{projections onto the left and right  factor of $Y$} of $\py\Lambda$ are surjective,
the dimension of the maximal stratum of $\ps\Lambda$ may only be $2$
or $3.$  If it is $3,$ then there is an open set, $F$ in $\ps\Lambda$ that
is a submanifold of $S^2\times S^2$ of codimension-$1,$ so that the
Schwartz kernel of the propagator $K$ is locally given by
\begin{equation}\label{propagator}
K=\sum_{\red{|\alpha|\leq M}} \delta^{(\alpha)}(u)\phi_\alpha(y)
\end{equation}
where now $u \in \RR$ is locally a defining function for $\ps\Lambda$, \red{$M<\infty$ and $\phi_\alpha \in C^\infty(F)$}.

We will need a slightly stronger consequence of~\cite[Theorem 1]{DuGu:75} than that $(\WF(K))'=\Lambda$. In particular, we need that
\begin{equation}\label{Kregularity}
\begin{aligned}
  ({\WF}^{-1}(K))'&=\Lambda,\\
    {\WF}^{-1-\ep}(K)&=\emptyset \text{ for all }  \ep>0,\\
  \end{aligned}
\end{equation}
where $\WF^s$ denotes the $s$-wavefront set, i.e., $\rho\notin
\WF^s(u)$ if and only \red{if} there exists $A\in \Psi^0$ so that $Au\in H^s$
and $\sigma(A)(\rho)\neq 0$. Now, let $x_0\in F$ and $V$ a
neighborhood of $x_0$ so that~\eqref{propagator} is valid on $V$. Let
$\chi\in C_c^\infty(V)$ with $\chi(x_0)=1$. Then,
by~\eqref{propagator} \begin{equation}\label{chiK}
  \chi K\in \bigcup_{\e>0}H^{-1/2-j-\e}\setminus
H^{-1/2-j},\end{equation} where $j$ is the largest $\smallabs{\alpha}$
such that the coefficient $\phi_\alpha$ in \eqref{propagator} is
nonvanishing on $\supp \chi.$  On the other hand,
$$
\begin{aligned}
  T^*_{x_0}(S^2\times S^2)\cap {\WF}^{-1}(K)&\neq\emptyset,\\
    T^*_{x_0}(S^2\times S^2)\cap {\WF}^{-1-\ep}(K)&=\emptyset.
  \end{aligned}
$$
Either the first or the second of these statements contradicts
\eqref{chiK} depending on whether $j=0$ or $j \geq 1.$

We conclude from this contradiction that in fact the dimension of the
maximal stratum is $2.$
\blue{On the other hand, the rank of the projection from
  $\Lambda\subset S^*S^2 \times S^*S^2$ to $S^2$ in the first
  factor already has rank $2$. Hence in order for the stratum
  dimension not to exceed $2,$ it must be the case that $d_\xi
  \pi_{S^2}\Phi_\pi (x,\xi)=0$ for all $x, \xi \in S^*(S^2).$}
This means that
$\pi_{_{\!S^2}} \Phi_\pi(S^*_x (S^2))$ is a \emph{single point} for
each $x \in S^2;$ for brevity we denote this \blue{point}
$\Phi_\pi(x).$ \blue{Since $Y$ is a $\nonminP_{2\pi}$ manifold, by
 positivity of the injectivity radius of a compact manifold, there is
 some positive minimal common period, hence $Y$ is a
  $\minP_{2\pi/k}$ manifold for some positive integer $k$. We
 now consider separately the cases where $k$ odd and even.}

\blue{\noindent\textsc{Case 1: $k$ odd.}}  It has been shown by
Gromoll--Grove \cite{GrGr:81} that on $Y$ diffeomorphic to $S^2$, the
$\minP_{a}$ condition implies that $Y$ is an $\SC_{a}$ manifold (again
in the terminology of \cite[Section 7.8]{Be:78}), which is to say, all
geodesics have minimal period exactly $a$ and are without
self-intersection (``simple'').  If $Y$ is a $\minP_{2\pi/k}$ manifold
for $k$ \emph{odd}, we thus conclude from \cite{GrGr:81} that
$\Phi_\pi(x)\neq x$ for all $x,$ as otherwise this would contradict
simplicity of the geodesics.  \red{\begin{lemma}\label{l:blaschke}
    Suppose that $Y$ is as above and $Y$ is a $\minP_{2\pi/k}$ surface
    for some $k$ odd. Then, $Y$ is a Blaschke surface.
\end{lemma}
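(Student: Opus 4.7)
The plan is to show that the cut distance at every point $x\in Y$ is independent of direction, so that the tangent cut locus at each $x$ is a round sphere; this is the defining property of a Blaschke manifold.

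Set $a:=2\pi/k$. Since $k$ is odd, $\pi\equiv a/2\pmod{a}$, so by the $\minP_a$ periodicity the hypothesis that $\pi_{S^2}\Phi_\pi(S^*_x S^2)$ is a single point $\Phi_\pi(x)$ translates into
$$
\exp_x\!\bigl(\tfrac{a}{2}\,\xi\bigr) = F(x)\qquad \text{for every unit }\xi\in T_xY,
$$
where I write $F(x):=\Phi_\pi(x)$. In other words, $\exp_x$ collapses the sphere of radius $a/2$ in $T_xY$ to the single point $F(x)$. Moreover $F:Y\to Y$ is a smooth fixed-point-free involution: reversing any geodesic from $x$ to $F(x)$ and applying the hypothesis at $F(x)$ gives $F(F(x))=x$, while $F(x)\neq x$ is the observation just preceding the lemma.

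First, the collapse of $\exp_x$ on the sphere $S_{a/2}(0)\subset T_xY$ makes each vector $\tau$ of length $a/2$ a conjugate point of the origin along the corresponding unit-speed geodesic. Indeed, the tangent direction to $S_{a/2}(0)$ at $\tau$ lies in $\ker d\exp_x(\tau)$, and since $\dim T_xY=2$ this already drops the rank of $d\exp_x$ to at most one. Hence the first conjugate radius at $x$ is at most $a/2$.

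Second, I would upgrade this to the full Blaschke condition by showing that the cut distance $c(x,\xi)=a/2$ for every unit $\xi\in T_xY$. The upper bound is immediate, since distinct geodesics from $x$ arrive at $F(x)$ at time $a/2$. The lower bound splits into two verifications: (i) no two geodesics from $x$ meet at a common point at time strictly less than $a/2$, and (ii) no conjugate point occurs strictly before $a/2$ along any geodesic. For (i), a Klingenberg-type smoothing argument transforms such a coincidence into a closed geodesic of length less than $a$, contradicting $\SC_a$. For (ii), I would use a Morse index argument: on a $\SC_a$ surface diffeomorphic to $S^2$ every closed geodesic has Morse index $1$ (as on the round sphere of circumference $a$), and since $a/2$ already contributes a conjugate point to this index, no earlier conjugate point can be present.

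Combining these, the tangent cut locus at every $x\in Y$ is the sphere $S_{a/2}(0)\subset T_xY$, and $Y$ is Blaschke.

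The principal obstacle is ingredient (ii) of the second step: pinning down the first conjugate point via a rigidity statement about closed geodesics on $\SC_a$ surfaces. Ingredient (i) is a fairly standard application of Klingenberg's lemma, but the conjugate-radius lower bound genuinely relies on a uniform Morse index assertion for closed geodesics on a surface all of whose geodesics are simple closed of common length, and this is the most delicate step.
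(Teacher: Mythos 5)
You have the right target (cut distance $\equiv \pi/k$ in every direction) and the right starting point (the collapse $\exp_x(\tfrac{\pi}{k}\xi)=F(x):=\Phi_{\pi/k}(x)$ for all unit $\xi$), but your route to the lower bound on the cut distance has a genuine gap, and it is the one you yourself flag: step (ii), the claim that no conjugate point occurs along any geodesic before time $\pi/k$, which you propose to deduce from a ``uniform Morse index $1$'' assertion for closed geodesics on an $\SC_a$ surface. That assertion is not proved, is not a standard citable fact, and is essentially as strong as the Blaschke conclusion you are after (on a Blaschke sphere prime closed geodesics do have index $1$, but that is a consequence of the structure being established, not an input). Step (i) also cannot be cleanly separated from it, since the Klingenberg smoothing argument only yields a smooth closed geodesic at a nearest cut point that is \emph{not} conjugate; if the nearest cut point is conjugate you are thrown back onto (ii).

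The gap is avoidable: the paper closes it in one stroke with no conjugate-point analysis at all. Since \emph{every} unit-speed geodesic from $x$ arrives at $F(x)$ at time $\pi/k$, if $d(x,F(x))$ were some $t_0<\pi/k$, the minimizing geodesic from $x$ to $F(x)$ would pass through $F(x)$ at time $t_0$ and again at time $\pi/k$, i.e.\ it would self-intersect, contradicting the simplicity of all geodesics supplied by Gromoll--Grove. Hence $d(x,F(x))=\pi/k$ exactly, so every geodesic from $x$ is minimizing on $[0,\pi/k]$ (its length equals the distance between its endpoints, and subsegments of minimizers are minimizers), which gives $c(x,\xi)\geq\pi/k$ directly; combined with your (correct) upper bound, the cut locus is spherical and $Y$ is Blaschke. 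I recommend replacing your two-pronged cut-point dichotomy with this distance argument, which uses only the collapse of the time-$\pi/k$ flowout and simplicity of geodesics.
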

\begin{proof}
We recall from
  \cite[Theorem 5.43]{Be:78}
 that among several equivalent definitions of a Blaschke surface is that \emph{the cut locus is spherical}, which is to say the
  distance to the first cut point is independent of direction at each
  point.  For $Y$ a $\minP_{2\pi/k}$ surface with $k$ odd, $\Phi_{\pi/{{k}}}(x)$ has distance $\pi/{{k}}$ from $x$
  for all $x,$ since otherwise a geodesic from $x$ would pass through
  $\Phi_{\pi/{{k}}}(x)$ at time $t_0\in (0, \pi/{{k}})$ and then would
  self-intersect at time $\pi/{{k}},$ contradicting the simplicity of the
  geodesics from \cite{GrGr:81}.  But then every geodesic must in fact
  be minimizing up to time-$\pi/{{k}},$ as a failure to be minimizing would
 allow us to construct a continuous, piecewise smooth curve from $x$ to
 $\Phi_{\pi/{{k}}}(x)$ of length shorter than $\pi/{{k}}.$  Hence the cut-radius is
 exactly $\pi/{{k}},$ at every point in every direction, and our surface is
 indeed Blaschke.
 \end{proof}}
By Lemma~\ref{l:blaschke} together with the
resolution of the Blaschke Conjecture \cite{Gr:63} (cf.\ \cite[Theorem
5.59]{Be:78}), we conclude that $Y=S^2_{2\pi/{{k}}}.$

\blue{We now rule out nonstandard spheres, with $k \neq 1.$
We will identify spheres of all radii with one another using standard
polar coordinates, and note that for all $k,$ on $S^2_{2\pi/k},$
$\Phi_{\pi/k}(x)=-x,$ the antipode.
}
\blue{Now observe that since $e^{-i\pi \nu}\delta_x$ is supported at $\Phi_{\pi}(x)=-x$ and $K$ takes the form~\eqref{propagator},
$$
e^{-i\pi \nu}\delta_x=c_1\delta_{-x}
$$
for some $c_1\neq 0$. Moreover, $c$ is independent of $x$ since isometries act transitively on $S^2_{2\pi/k}$. Now, since $S^2_{2\pi}$ is non-diffractive,
$$
\sum_{\ell=0}^\infty \sum_{m=-\ell}^\ell e^{-i\pi
  (\ell\red{-}\frac{1}{2})}\overline{Y_\ell^m (y)}Y_\ell^m (x)=c_2\delta_{-x}(y)
$$
for some $c_2\neq 0$. Thus, there is $c\neq 0$ such that for all $x,y$
$$
\sum_{\ell=0}^\infty \sum_{m=-\ell}^\ell e^{-i\pi
  (\ell\red{-}\frac{1}{2})}\overline{Y_\ell^m (y)}Y_\ell^m (x)=c
\sum_{\ell=0}^\infty \sum_{m=-\ell}^\ell e^{-i\pi \sqrt{ k^2\ell(\ell-1)+\frac{1}{4}}}\overline{Y_\ell^m (y)}Y_\ell^m (x).
$$
Since the $Y_\ell^m$ form an orthonormal basis for $L^2$, this implies
that for all $\ell \in \NN,$
$$
\sqrt{k^2\ell(\ell-1)+\frac{1}{4}}-\ell+\frac{1}{2} \in \beta+ 2 \ZZ 
$$
for some fixed real number $\beta\in [0, 2).$
We note, though, that
$$
\sqrt{k^2\ell(\ell-1)+\frac{1}{4}}-\ell+\frac{1}{2} =k\ell -\frac 12 k
-\frac{k^2-1}{8k \ell} -\ell +\frac 12 + O(\ell^{-\red{2}}),
$$
and, \red{by Lemma~\ref{l:noInteger} (below),} this quantity cannot have constant fractional parts as $\ell
\to \infty$ unless $k=1.$

Thus, $Y=S^2_{2\pi}$.}  \blue{This finishes the case where $k$
is odd.}

\red{
\begin{lemma}
\label{l:noInteger}
Let $p,q\in\mathbb{Z}$, $q> 0$, $b,c\in \RR$ and $c\neq 0$ and define 
$$
\alpha_\ell:= \frac{p}{q}\ell+b+c\ell^{-1}+O(\ell^{-2}),\qquad \ell=1,2,\dots
$$
Suppose that $\{j_k\}_{k=1}^\infty\subset \mathbb{Z}$ with $j_k\underset{k\to \infty}{\longrightarrow} \infty$. Then there are $\ell, m$ such that 
$$
\alpha_{j_\ell}-\alpha_{j_m}\notin \mathbb{Z}. 
$$
\end{lemma}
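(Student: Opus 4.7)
\textbf{Proof plan for Lemma~\ref{l:noInteger}.} I will argue by contradiction: suppose that $\alpha_{j_\ell} - \alpha_{j_m} \in \mathbb{Z}$ for \emph{every} pair of indices $\ell, m$. The strategy has two steps. First, I use pigeonhole to eliminate the linear term $\tfrac{p}{q}\ell$ after passing to a subsequence. Then I exploit the fact that the remaining $\ell^{-1}$-order expression must be less than $1$ in absolute value for large indices, hence equal to the integer $0$ exactly; letting one index tend to infinity then forces $c=0$, contradicting the hypothesis.

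For the first step, observe that $\tfrac{p\, j_k}{q} \bmod 1$ takes values in the finite set $\{0, 1/q, 2/q,\dots,(q-1)/q\}$. By the pigeonhole principle there is an infinite sub-subsequence on which $\tfrac{p\, j_k}{q} \bmod 1$ is constant. Relabeling, we may assume $\tfrac{p(j_k - j_m)}{q} \in \mathbb{Z}$ for all $k, m$. Since the $b$-terms cancel in the difference, subtracting this integer, the hypothesis $\alpha_{j_k} - \alpha_{j_m} \in \mathbb{Z}$ reduces to
$$
c(j_k^{-1} - j_m^{-1}) + O(j_k^{-2}) + O(j_m^{-2}) \in \mathbb{Z}.
$$

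For the second step, since $j_k \to \infty$, for $k, m$ sufficiently large the left-hand side has absolute value strictly less than $1$, and so it must vanish. Hence there is $K_0$ such that $c\, j_k^{-1} + O(j_k^{-2}) = c\, j_m^{-1} + O(j_m^{-2})$ for all $k, m \geq K_0$. Fixing $k \geq K_0$ and letting $m \to \infty$, the right-hand side tends to $0$, so $c\, j_k^{-1} + O(j_k^{-2}) = 0$ for all $k \geq K_0$. Multiplying through by $j_k$ gives $c = O(j_k^{-1})$; since $k$ can be taken arbitrarily large, $c = 0$, contradicting the assumption $c \neq 0$.

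I do not anticipate any serious obstacles: the pigeonhole reduction is standard, and the error-term bookkeeping is routine provided we track that $|c|j_k^{-1}$ genuinely dominates $O(j_k^{-2})$ for large $k$. The only subtle point is ensuring that the same subsequence can be used for both $k$ and $m$, which is automatic once we restrict to the sub-subsequence on which $\tfrac{p j_k}{q} \bmod 1$ is constant.
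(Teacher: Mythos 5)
Your proof is correct and rests on the same key observation as the paper's: after removing the rational linear term (which contributes only multiples of $1/q$ — handled in your case by pigeonhole on $\tfrac{pj_k}{q}\bmod 1$), the $c\ell^{-1}$ term is an eventually small but nonzero obstruction to the differences being integers. The paper packages this slightly differently — it works directly modulo $\tfrac{1}{q}\mathbb{Z}$ and exhibits explicit indices ($j_m$ large and $j_\ell \ge 4j_m$) for which the difference is trapped strictly between $0$ and $1/q$ — whereas you argue by contradiction, force the residual quantity to equal $0$, and let $m\to\infty$ to conclude $c=0$; both arguments are sound.
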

\begin{proof}
Let 
$$
f(\ell)  =\frac{p}{q}\ell+b+\frac{c} {\ell}.
$$
Then, there is $L>0$ such for $j\geq J$, $\alpha_j$ satisfies
$$
\abs{\alpha_j-f(j)} \leq \frac{|c|}{3j }.
$$
Hence, for $\ell, m\geq L$, letting $e_\ell=\alpha_\ell-f(\ell),$ $e_m=\alpha_m-f(m),$ we obtain
\begin{equation}\label{mess}
\begin{aligned}
\alpha_m-\alpha_\ell &= \frac{p}{q}m+b+\frac{c}{ m}+e_m-(\frac{p}{q}\ell+b+\frac{c}{
                      \ell})-e_\ell\\
&=\frac{p}{q}(m-\ell) +\frac{c}{m}-\frac{c}{ \ell}+e_m-e_\ell.
\end{aligned}
\end{equation}
Now, our estimates on $e_m,$ $e_\ell$ easily give
$$
 |e_m-e_\ell|\leq \frac{|c|}{3m}+\frac{|c|}{3\ell}.
$$
Now, since $j_k\to\infty$, there is $M>0$ such that for $m\geq M$, $j_m>\max(L,4|c|{q})$. Fix $m\geq M$. Let $\ell\geq M$ such that $j_\ell\geq 4j_m$. Then, we have
$$
\alpha_{j_m}-\alpha_{j_\ell}\,\operatorname{mod}\frac{1}{q}\blue{\ZZ}=\frac{c}{j_m}-\frac{c}{ j_\ell}+e_{j_m}-e_{j_\ell}\,\operatorname{mod}\frac{1}{q}\blue{\ZZ},
$$
and
$$
0<\frac{|c|}{\blue{3j_m}}\leq \Big|\frac{c}{j_m}-\frac{c}{ j_\ell}+e_{j_m}-e_{j_\ell}\Big|\leq \frac{3|c|}{\blue{2j_m}}\leq \frac{3}{4q}.
$$
Hence the fractional part of $\alpha_{j_m}-\alpha_{j_\ell}$ is nonzero. 
\end{proof}}


\blue{\noindent\textsc{Case 2: $k$ even.}}
We now assume that $Y$ is a $\minP_{2\pi/k}$ surface for
some \emph{even} integer $k,$ hence that $Y$ is a $\nonminP_\pi$ surface; we will
then derive a contradiction.

For each $y \in S^2,$ $e^{-i \pi \nu} \delta_y$ is by hypothesis
supported at the flowout of $S^*_y (S^2),$ which we now know to be the
point $y$ itself.  Thus $e^{-i \pi \nu} \delta_y$ must equal $\psi(y) \delta_y$ for some function
$\psi;$ more generally this tells us that \begin{equation}\label{propagatorequation}
    e^{-i\pi \nu} f=\psi f\end{equation} for
every $f \in L^2.$  Applying \eqref{propagatorequation} to $f=\phi_j,$ an
eigenfunction of $\Lap_Y$ with eigenvalue $\lambda^2,$ tells us that
\begin{equation}
  \label{eq:1}
  e^{-i\pi \sqrt{\lambda_j^2+1/4}}=\psi(y).
\end{equation}
The left side is of course constant, so $\psi$ is in fact constant,
and all of these values must agree, i.e., there exists $\beta\in \red{[0,2)}$
such that for all $\lambda_j^2$ in the spectrum of $\Lap_Y,$
\begin{equation}\label{integralconstraint}
\sqrt{\lambda_j^2+1/4}\equiv \beta \bmod 2 \ZZ;
\end{equation}
equivalently this is just the statement that the spectrum of $\nu$ lies in
$\beta + 2\ZZ.$

Now in order to derive a contradiction, we turn to the strong results known about spectral asymptotics of
Zoll surfaces; this argument is based on the fact
that \blue{the spectrum of a $\minP_{\pi/k}$ manifold} must closely resemble
that of $S^2_{\pi/k},$ which is indeed diffractive.  Duistermaat--Guillemin \cite{DuGu:75},
Weinstein \cite{We:75}, and Colin de Verdi\`ere \cite{Co:79}
have obtained very precise estimates of the \emph{clustering} of the
eigenvalues of such a Zoll surface.  Thus, e.g.,
\cite[Corollaire~1.2]{Co:79} (see also \cite{Gu:77}, \cite{We:77})
    shows that there is $M>0$ so that the spectrum of $\Lap_Y\red{+\frac{1}{4}}$ is entirely contained in
a union of intervals
$$
I_n=\big[ 4(n+\alpha/4)^2-M, 4(n+\alpha/4)^2+M\big],
$$
where $\alpha$ is the Maslov index of all the $\pi$-periodic geodesics;
the (crucial)\footnote{\red{These factors will later give rise to our contradiction and arise from the rescaling of a $\nonminP_{2\pi}$ metric to a $\nonminP_{\pi}$ metric.}} factors of $4$ arise since we are dealing with a $\nonminP_\pi$ surface
rather than a $\nonminP_{2\pi}$ surface as in \cite{Co:79}.  \red{Since the eigenvalues of $\Delta_Y\red{+\frac{1}{4}}$, $\lambda_\bullet^2\red{+\frac{1}{4}}$, lie in $I_n$,} the square
roots $\red{\sqrt{\lambda_\bullet^2+\frac{1}{4}}}$ of the eigenvalues lie in intervals
$$
J_n =\big[ 2(n+\alpha/4)-C/n, 2(n+\alpha/4)+C/n\big].
$$
On the
other hand, for $n$ large, the constraint \eqref{integralconstraint}
implies that \emph{each interval $I_n$ can contain at most one
  eigenvalue} \blue{(possibly with high multiplicity)}. \red{Indeed, for $n$ large enough, $2C/n<2$ and hence the interval $J_n$ has length less than 2 and contains at most one element of the form~\eqref{integralconstraint}.}
We
have thus reduced to the situation studied by Zelditch in \cite{Ze:96}
of a \emph{maximally degenerate Laplacian}; Zelditch proves
\cite[Theorem C]{Ze:96} that this places a yet stronger constraint on
the locations of the eigenvalues and that there is an operator $A$
with spectrum in $\NN$ such that
$$
\Lap_Y= 4\big(A+ \frac 12 \big)^2- 1 +S
$$
with $S$ a smoothing operator; here again we have rescaled by a factor
of $4$ since we are dealing with a $\nonminP_\pi$ surface.  Hence the eigenvalues
$\sqrt{\lambda_\bullet^2+1/4}$ of $\nu$ are all of the form
\blue{\begin{equation}\label{eigenvalues}
\sqrt{4(\ell+1/2)^2-3/4}+O(\ell^{-\infty})=2\ell+1-\frac{3}{16 \ell} +O(\ell^{-2}).
\end{equation}}
Recall on the other hand that they are in $\beta+2\ZZ$ by
\eqref{integralconstraint}.
\blue{\red{By Lemma~\ref{l:noInteger},} these two constraints are incompatible, i.e.\ solutions to
\eqref{eigenvalues} cannot asymptotically differ by even integers. 
}

Hence we have ruled out all $\nonminP_\pi$ manifolds, and
completed the case of $Y$ diffeomorphic to $S^2.$

To finish the proof, we
now turn to the (easier) case when $Y$ is diffeomorphic to $\RP^2.$
In this case, \blue{Lin--Schmidt~\cite[Theorem 2]{LinSchmidt} shows that since all geodesics on $Y$ are closed, $Y=\RP^2_a$ for some $a>0$. (Cf.\ Green's proof of Blaschke's
Conjecture~\cite{Gr:63}.) Next, since $Y$ is a $\nonminP_{2\pi}$ manifold,}
we know that $Y=\RP^2_{4\pi/k}$ for some $k \in\NN.$  We now rule out
all but $\RP^2_{2\pi}.$  To start, we know by the same argument as in
the sphere case that $\Phi_\pi(x)$ is a single point for each $x.$
This does not happen unless $k$ is even and at least $2.$  For $k$
even, $\RP^2_{4\pi/k}$ is a 
$\nonminP_\pi$-manifold so the same argument \red{as} in the sphere case tells us
that the spectrum of $\nu=\sqrt{\Lap_Y+1/4}$ lies in $\beta + 2\ZZ$ for $\beta$ fixed.
Next, the
spectrum of  $\RP^2_{4 \pi /k}$ is the set
$$
\red{\frac {k^2}{4}}2\ell(2\ell+1),\quad \ell \in \NN;
$$
thus the spectrum of $\nu$ is
\red{$$\bigg(\frac{k^2}{4} 2\ell(2\ell+1)+\frac{1}{4}\bigg)^{\frac{1}{2}}=k\ell+\frac{k}{\blue{4}}-\frac{k^2-4}{32k\ell}+O(\ell^{-2}).$$}
\blue{\red{By Lemma~\ref{l:noInteger},} these cannot have constant fractional part unless $k=2.$}\qed

      \bibliographystyle{abbrv} 
\bibliography{references.bib}

\end{document}